\begin{document}
\title[Coupled fixed points of monotone mappings]{Coupled Fixed Points of monotone mappings in a metric space with a graph}

\author{M. R. Alfuraidan, and M. A. Khamsi}

\address{Monther Rashed Alfuraidan\\Department of Mathematics \& Statistics \\
King Fahd University of Petroleum and Minerals\\
Dhahran 31261, Saudi Arabia.}
\email{monther@kfupm.edu.sa}
\address{Mohamed Amine Khamsi\\Department of Mathematical Sciences, University of Texas at El Paso, El Paso, TX 79968, USA.}
\email{mohamed@utep.edu}

\subjclass[2010]{Primary 47H09, Secondary 46B20, 47H10, 47E10}
\keywords{Directed graph, coupled fixed point, mixed monotone mapping, multivalued mapping.}

\maketitle
\newtheorem{theorem}{Theorem}[section]
\newtheorem{acknowledgement}{Acknowledgement}
\newtheorem{algorithm}{Algorithm}
\newtheorem{axiom}{Axiom}
\newtheorem{case}{Case}
\newtheorem{claim}{Claim}
\newtheorem{conclusion}{Conclusion}
\newtheorem{condition}{Condition}
\newtheorem{conjecture}{Conjecture}
\newtheorem{corollary}{Corollary}[section]
\newtheorem{criterion}{Criterion}
\newtheorem{definition}{Definition}[section]
\newtheorem{example}{Example}[section]
\newtheorem{exercise}{Exercise}
\newtheorem{lemma}{Lemma}[section]
\newtheorem{notation}{Notation}
\newtheorem{problem}{Problem}
\newtheorem{proposition}{Proposition}[section]
\newtheorem{remark}{Remark}[section]
\newtheorem{solution}{Solution}
\newtheorem{summary}{Summary}
\newtheorem{property}{Property}
\begin{abstract}
In this work, we define the concept of mixed $G$-monotone mappings defined on a metric space endowed with a graph.  Then we obtain sufficient conditions for the existence of coupled fixed points for such mappings when a weak contractivity type condition is satisfied.
\end{abstract}

\section{Introduction}
\noindent Investigation of the existence of fixed points for single-valued mappings in partially ordered metric spaces was initially considered by Ran and Reurings in \cite{RR} who proved the following result:

\begin{theorem}\label{ran} \cite{RR} Let $(X,\preceq)$ be a partially ordered set such that every pair $x,y\in X$ has an upper and lower bound. Let $d$ be
a metric on $X$ such that $(X,d)$ is a complete metric space. Let $f: X\rightarrow X$ be a continuous monotone (either order preserving or
order reversing) mapping. Suppose that the following conditions hold:
\begin{enumerate}
  \item There exists $k\in [0,1)$ with
\begin{center}
$d(f(x),f(y))\leq k\ d(x,y)$, for all $x,y\in X$ such that $x \preceq y$.
\end{center}
  \item There exists an $x_{0} \in X$ with $x_{0} \preceq f(x_{0})$ or $f(x_{0}) \preceq x_0$.
\end{enumerate}
Then $f$ is a Picard Operator (PO), that is $f$ has a unique fixed point $x^{*}\in X$ and for each $x\in X$, $\lim\limits_{n\rightarrow \infty} f^{n}(x)=x^{*}$.
\end{theorem}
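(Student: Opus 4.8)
The plan is to follow the classical Banach-type iteration scheme, but carried out only along comparable elements, since the contraction in condition~(1) is assumed to hold solely for pairs with $x \preceq y$. First I would establish existence of a fixed point. By condition~(2) we may assume, without loss of generality, that $x_0 \preceq f(x_0)$ (the case $f(x_0) \preceq x_0$ being symmetric). The crucial observation is that a monotone map---whether order preserving or order reversing---sends a comparable pair to a comparable pair. Hence, starting from the comparable pair $x_0 \preceq f(x_0)$, a straightforward induction shows that the consecutive iterates $f^{n}(x_0)$ and $f^{n+1}(x_0)$ are comparable for every $n$. This is exactly what is needed to invoke condition~(1) at each step, yielding
\[
d\big(f^{n}(x_0), f^{n+1}(x_0)\big) \leq k\, d\big(f^{n-1}(x_0), f^{n}(x_0)\big) \leq \cdots \leq k^{n}\, d\big(x_0, f(x_0)\big).
\]
Since $k \in [0,1)$, a routine geometric-series estimate shows that $\{f^{n}(x_0)\}$ is a Cauchy sequence; completeness of $(X,d)$ gives a limit $x^{*} = \lim_{n\to\infty} f^{n}(x_0)$, and continuity of $f$ forces $f(x^{*}) = \lim_{n\to\infty} f^{n+1}(x_0) = x^{*}$, so $x^{*}$ is a fixed point.

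Next I would isolate the key estimate for comparable starting points: if $a \preceq b$, then preservation of comparability under $f$ (again valid for either monotonicity type) shows that $f^{n}(a)$ and $f^{n}(b)$ remain comparable for all $n$, and iterating condition~(1) gives $d(f^{n}(a), f^{n}(b)) \leq k^{n}\, d(a,b) \to 0$. This single estimate drives both uniqueness and global convergence.

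Finally I would exploit the hypothesis that every pair admits an upper and a lower bound in order to bridge elements that need not be comparable to $x^{*}$. Given an arbitrary $y \in X$, choose an upper bound $z$ of $\{y, x^{*}\}$, so that $y \preceq z$ and $x^{*} \preceq z$. Applying the key estimate to the pairs $(y,z)$ and $(x^{*},z)$ yields $d(f^{n}(y), f^{n}(z)) \to 0$ and $d(x^{*}, f^{n}(z)) = d(f^{n}(x^{*}), f^{n}(z)) \to 0$; the triangle inequality then forces $f^{n}(y) \to x^{*}$, which is precisely the Picard-operator conclusion. Uniqueness follows by the same device: if $y^{*}$ is another fixed point, picking an upper bound $z$ of $\{x^{*}, y^{*}\}$ shows $d(x^{*}, f^{n}(z)) \to 0$ and $d(y^{*}, f^{n}(z)) \to 0$, whence $x^{*} = y^{*}$.

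The main obstacle I anticipate is one of bookkeeping rather than of depth: condition~(1) is available only for comparable pairs, so every application of the contraction must be justified by first verifying comparability, and the two monotonicity cases must be tracked in parallel. The genuine content is the elementary but essential fact that a monotone mapping preserves comparability, which legitimizes the repeated use of the contraction along the orbit. The upper/lower bound hypothesis is what rescues the argument for points not comparable with $x^{*}$; without it, both uniqueness and global convergence could fail.
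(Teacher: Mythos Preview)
The paper does not prove this theorem: it is quoted in the Introduction as a background result from Ran and Reurings \cite{RR}, with no proof supplied, so there is no in-paper argument to compare your proposal against. That said, your outline is correct and is precisely the classical Ran--Reurings argument---monotonicity preserves comparability along the orbit so that condition~(1) applies at each step, completeness and continuity yield a fixed point, and the upper/lower-bound hypothesis bridges incomparable points to give uniqueness and global convergence.
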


\noindent After this, different authors considered the problem of existence of a fixed point for contraction mappings in partially ordered metric spaces; see \cite{ BB,DMD, HS, NPRL} and references cited therein. Nieto, Pouso and Rodriguez-Lopez in \cite{NPRL} extended the ideas of \cite{RR}  to prove the existence of solutions to some differential equations.
\medskip

\noindent Generalizing the Banach contraction principle for multivalued mappings, Nadler \cite{N} obtained the following result:

\begin{theorem}Let $(X,d)$ be a complete metric space.  Denote by $CB(X)$ the set of all nonempty closed bounded subsets of $X$.  Let $F: X \rightarrow CB(X)$ be a multivalued mapping. If there exists $k\in [0,1)$ such that
$$H(F(x),F(y))\leq k\ d(x,y)$$
for all $x,y\in X,$ where $H$ is the Pompeiu-Hausdorff metric on $CB(X)$, then $F$ has a fixed point in $X$, i.e., there exists $x \in X$ such that $x \in F(x)$.
\end{theorem}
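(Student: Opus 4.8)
The plan is to build, by iterated approximate selection, an orbit $(x_n)$ satisfying $x_{n+1}\in F(x_n)$ whose consecutive distances decay fast enough to force the Cauchy property, and then to pass to the limit. The basic tool is the elementary selection property of the Pompeiu--Hausdorff metric: if $A,B\in CB(X)$ and $a\in A$, then $d(a,B)\le H(A,B)$, so for every $\varepsilon>0$ there is a point $b\in B$ with $d(a,b)\le H(A,B)+\varepsilon$. First I would fix $x_0\in X$ and choose $x_1\in F(x_0)$ arbitrarily.

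Next comes the inductive step: given $x_n\in F(x_{n-1})$, apply the selection property with $A=F(x_{n-1})$, $B=F(x_n)$, $a=x_n$ and $\varepsilon=k^n$, producing $x_{n+1}\in F(x_n)$ with
$$d(x_n,x_{n+1})\le H(F(x_{n-1}),F(x_n))+k^n\le k\,d(x_{n-1},x_n)+k^n,$$
where the last inequality is the contraction hypothesis. Writing $a_n=d(x_n,x_{n+1})$ this reads $a_n\le k\,a_{n-1}+k^n$; dividing by $k^n$ and iterating gives $a_n\le (a_0+n)\,k^n$. Since $k\in[0,1)$, the series $\sum_n a_n$ converges, so $(x_n)$ is Cauchy.

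By completeness, $x_n\to x^*$ for some $x^*\in X$. To finish I would show $d(x^*,F(x^*))=0$. Using $x_{n+1}\in F(x_n)$ together with the contraction hypothesis,
$$d(x^*,F(x^*))\le d(x^*,x_{n+1})+d(x_{n+1},F(x^*))\le d(x^*,x_{n+1})+H(F(x_n),F(x^*))\le d(x^*,x_{n+1})+k\,d(x_n,x^*).$$
Both terms on the right tend to $0$ as $n\to\infty$, hence $d(x^*,F(x^*))=0$; since $F(x^*)$ is closed, this yields $x^*\in F(x^*)$.

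The routine parts are the geometric estimate and the limit computation; the step requiring the most care is the inductive selection, because the multivalued setting prevents a direct application of Banach's principle --- one must choose $x_{n+1}$ approximately realizing $d(x_n,F(x_n))$ and then control the accumulated approximation errors (via the choice $\varepsilon=k^n$) so that summability, and hence the Cauchy property, is preserved. The closedness of $F(x^*)$ in the last line is precisely what converts the distance-zero conclusion into genuine membership.
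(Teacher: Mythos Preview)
Your proof is correct and is essentially Nadler's original argument. Note, however, that the paper does not actually supply a proof of this statement: it is quoted in the introduction as a classical result due to Nadler and simply cited, so there is no in-paper proof to compare against. The approximate-selection lemma you invoke is precisely what the paper later records as Lemma~\ref{HHH}, and your choice of error term $\varepsilon=k^n$ (yielding $a_n\le (a_0+n)k^n$) is a clean variant of the standard trick; Nadler's original uses a factor slightly greater than $k$ to absorb the error multiplicatively, but either device gives a summable sequence and the conclusion is the same.
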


\noindent Recently, two results have appeared, giving sufficient conditions for $f$ to be a PO, if $(X,d)$ is endowed with a graph. The first result in this direction was given by Jachymski and Lukawska \cite{J,LJ} which generalized the results of \cite{DMD,NPRL, OP, PR} to single-valued mapping in metric spaces with a graph instead of partial ordering. The extension of Jachymaski's result to multivalued mappings is done in \cite{monther}.

\medskip
\noindent It is well known that mixed monotone operators were initially considered by Guo and Lakshmikantham \cite{GL87}. Thereafter, different authors considered the problem of existence of a fixed point for such mappings in Banach spaces and then in partially ordered metric spaces, see for instance \cite{G88,CZ13}.  The mixed monotone operator equation is important for applications due to the existence of particular classes of  integro-differential equations and boundary value problems that are solved by such equations \cite{HS}.

\medskip
\noindent The aim of this paper is two folds: first define the mixed $G$-monotone for both single and multivalued mappings,
 second extend the conclusion of Theorem \ref{ran} to both cases in metric spaces endowed with a graph.

\section{Preliminaries}

\noindent Let $G$ be a directed graph (digraph) with set of vertices $V(G)$ and set of edges $E(G)$ contains all the loops, i.e. $(x,x) \in E(G)$ for any $x \in V(G)$. Such digraphs are called reflexive. We also assume that $G$ has no parallel edges (arcs) and so we can identify $G$ with the pair $(V(G),E(G))$. By $G^{-1}$ we denote the conversion of a graph $G$, i.e., the graph obtained from $G$ by reversing the direction of edges. The letter $\widetilde{G}$ denotes the undirected graph obtained from $G$ by ignoring the direction of edges. Actually, it will be more convenient for us to treat $\widetilde{G}$ as a directed graph for which the set of its edges is symmetric. Under this convention,
$$E(\widetilde{G})=E(G)\bigcup E(G^{-1}).$$

\noindent If $x$ and $y$ are vertices in a graph $G$, then a (directed) path in $G$ from $x$ to $y$ of length $N$  is a sequence $(x_{i})_{i=0}^{i=N}$ of $N + 1$ vertices such that $x_{0} = x$, $x_{N} = y$ and $(x_{n-1},x_{n})\in E(G)$ for $i = 1,...,N$. A graph $G$ is connected if there is a directed path between any two vertices. $G$ is weakly connected if $\widetilde{G}$ is connected.

\medskip

\noindent In the sequel, we assume that $(X,d)$ is a metric space, and $G$ is a reflexive digraph (digraph) with set of vertices $V(G)=X$ and set of edges $E(G)$.\\

\begin{definition}\label{G-mapping}  Let $(X,d, G)$ be as described above.
\begin{itemize}
\item[(i)]  We say that a mapping $F: X\times X \rightarrow X$ has the mixed $G$-monotone property if
$$(x_1,x_2)\in E(G) \Longrightarrow (F(x_1,y),F(x_2,y))\in E(G),$$
for all $x_1, x_2, y \in X$, and
$$(y_1,y_2)\in E(G) \Longrightarrow (F(x,y_2),F(x,y_1))\in E(G),$$
for all $x, y_1, y_2 \in X.$
\item[(ii)] The pair $(x,y) \in X\times X$ is called a coupled fixed point of $F: X\times X \rightarrow X$ if
$$F(x,y) = x,\;\; and\; F(y,x) = y.$$
\end{itemize}
\end{definition}

\section{Main Results}

\noindent We begin with the extension of the main results of \cite{BL} to the case of metric spaces endowed with a graph.  Note that if $G$ is a directed graph defined on $X$ as described before, one can construct another graph on $X\times X$, still denoted by $G$, by
$$\Big((x,y), (u,v)\Big) \in E(G) \Longleftrightarrow (x,u) \in E(G)\; and\; (v,y) \in E(G),$$
for any $(x,y), (u,v) \in X\times X$.

\medskip

\begin{theorem}\label{BL}  Let $(X,d, G)$ be as above.  Assume that $(X,d)$ is a complete metric space.  Let $F: X\times X \rightarrow X$ be a continuous mapping having the mixed $G$-monotone property on $X$.  Assume there exists $k < 1$ such that
\begin{equation}\tag{BL}
d(F(x,y), F(u,v)) \leq \frac{k}{2} \Big[d(x,u) + d(y,v)\Big],
\end{equation}
for any $(x,y), (u,v) \in X\times X$ such that $\Big((x,y), (u,v)\Big) \in E(G)$.  If there exist $x_0, y_0 \in X$ such that $\Big((x_0,y_0), (F(x_0,y_0),F(y_0,x_0))\Big) \in E(G)$, then there exists $(x,y)$ a coupled fixed point of $F$ , i.e. $F(x,y) =x$ and $F(y,x) = y$.
\end{theorem}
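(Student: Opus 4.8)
\noindent The plan is to lift the problem to the product space and run a Jachymski-type graph-contraction argument there. Set $Y = X \times X$ with the metric $\rho\big((x,y),(u,v)\big) = \frac12\big(d(x,u)+d(y,v)\big)$; completeness of $(X,d)$ gives completeness of $(Y,\rho)$. Define $T : Y \to Y$ by $T(x,y) = \big(F(x,y),F(y,x)\big)$ and note that the fixed points of $T$ are exactly the coupled fixed points of $F$. Endow $Y$ with the product graph $G$ introduced just before the statement; the hypothesis then says precisely that $p_0 = (x_0,y_0)$ satisfies $(p_0, Tp_0) \in E(G)$. Writing $p_n = T^n p_0 = (x_n,y_n)$, the goal becomes to show that $(p_n)$ converges to a fixed point of $T$.

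\medskip

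\noindent First I would establish the contraction estimate $\rho(Tp, Tq) \le k\,\rho(p,q)$ for every edge $(p,q) \in E(G)$. If $p=(x,y)$, $q=(u,v)$ and $(p,q)\in E(G)$, then by definition $(x,u)\in E(G)$ and $(v,y)\in E(G)$. Applying (BL) to the pair $\big((x,y),(u,v)\big)$ controls $d(F(x,y),F(u,v))$, while applying (BL) to the pair $\big((v,u),(y,x)\big)$ — whose defining requirements $(v,y)\in E(G)$ and $(x,u)\in E(G)$ are exactly the ones in hand — controls $d(F(v,u),F(y,x)) = d(F(y,x),F(v,u))$ by symmetry of $d$. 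Adding the two inequalities and halving gives $\rho(Tp,Tq)\le k\,\rho(p,q)$, so $T$ is a contraction along every edge of $G$.

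\medskip

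\noindent The crux, and the step I expect to be the main obstacle, is to keep the orbit inside the graph, i.e. to prove $(p_n, p_{n+1}) \in E(G)$ for all $n$, so that the contraction estimate is available at every stage. I would proceed by induction, the base case being the hypothesis on $p_0$. Unwinding the product graph, the inductive step requires propagating $(x_n,x_{n+1})\in E(G)$ and $(y_{n+1},y_n)\in E(G)$ to the next index. Here the mixed $G$-monotone property is used: from $(x_n,x_{n+1})\in E(G)$ and monotonicity in the first variable one obtains $\big(F(x_n,y_n),F(x_{n+1},y_n)\big)\in E(G)$, and from $(y_{n+1},y_n)\in E(G)$ and the reversed monotonicity in the second variable one obtains $\big(F(x_{n+1},y_n),F(x_{n+1},y_{n+1})\big)\in E(G)$. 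These two edges only produce a path of length two from $x_{n+1}$ to $x_{n+2}$ through the intermediate vertex $F(x_{n+1},y_n)$, whereas the product graph demands the single edge $(x_{n+1},x_{n+2})\in E(G)$; collapsing this path is precisely the graph-theoretic surrogate for transitivity of the order in the classical mixed-monotone argument, and it is here that the structure of $G$ along the orbit must be exploited. The $y$-coordinate is treated symmetrically.

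\medskip

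\noindent Granting the orbit edges, the conclusion is routine. The contraction estimate yields $\rho(p_n,p_{n+1}) \le k^n\,\rho(p_0,p_1)$, so $(p_n)$ is Cauchy and, by completeness of $(Y,\rho)$, converges to some $p^{*} = (x,y)$. Since $F$ is continuous, $T$ is continuous, and therefore $Tp^{*} = \lim_{n\to\infty} Tp_n = \lim_{n\to\infty} p_{n+1} = p^{*}$. In coordinates this reads $F(x,y)=x$ and $F(y,x)=y$, exhibiting $(x,y)$ as the sought coupled fixed point.
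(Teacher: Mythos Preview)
Your product-space packaging is a clean reformulation, but it is essentially the paper's argument written in different coordinates: the paper works directly with the pair of sequences $x_{n+1}=F(x_n,y_n)$, $y_{n+1}=F(y_n,x_n)$, derives the same recursive bound $d(x_n,x_{n+1})+d(y_n,y_{n+1})\le k\big[d(x_{n-1},x_n)+d(y_{n-1},y_n)\big]$, and finishes by continuity of $F$ exactly as you do via continuity of $T$. So at the level of strategy there is no real difference.

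Where your account diverges from the paper is that you explicitly isolate the one step that is genuinely delicate and then stop short of resolving it. As you observe, the mixed $G$-monotone property applied in each variable separately only yields a directed path of length two from $x_{n+1}=F(x_n,y_n)$ through $F(x_{n+1},y_n)$ to $x_{n+2}=F(x_{n+1},y_{n+1})$, and similarly on the $y$-side; collapsing this to a single edge $(x_{n+1},x_{n+2})\in E(G)$ requires exactly a transitivity property of $G$, which is nowhere among the stated hypotheses (only reflexivity is assumed). Your phrase ``the structure of $G$ along the orbit must be exploited'' does not supply the missing ingredient, and under the hypotheses as written no such ingredient is available. For the record, the paper's own proof handles this same step with the words ``By induction'' and nothing more, so the gap you have located is present in the source as well; a hypothesis that $E(G)$ be transitive (so that the graph comes from a preorder, as in the motivating partial-order setting) is what is tacitly being used.
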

\begin{proof} By assumption, there exist $x_0, y_0 \in X$ such that
$$(x_0, F(x_0,y_0)) \in E(G)\;\; and\; (F(y_0, x_0), y_0) \in E(G).$$
Set $x_1 = F(x_0,y_0)$ and $y_1 = F(y_0,x_0)$.  Then $(x_0,x_1) \in E(G)$ and $(y_1,y_0) \in E(G)$, which implies
$$d\Big(F(x_0,y_0), F(x_1,y_1)\Big) \leq \frac{k}{2} \Big[d(x_0,x_1) + d(y_0,y_1)\Big],$$
and
$$d\Big(F(y_1,x_1), F(y_0,x_0)\Big) \leq \frac{k}{2} \Big[d(x_0,x_1) + d(y_0,y_1)\Big].$$
By induction, we construct two sequences $\{x_n\}$ and $\{y_n\}$ in $X$ such that
\begin{enumerate}
\item[(i)] $x_{n+1} = F(x_n,y_n)$, and $y_{n+1} = F(y_n,x_n)$;
\item[(ii)] $\displaystyle d(x_n, x_{n+1}) \leq \frac{k}{2} \Big[d(x_{n-1},x_{n}) + d(y_{n-1},y_{n})\Big],$
\item[(iii)] $\displaystyle d(y_n, y_{n+1}) \leq \frac{k}{2} \Big[d(x_{n-1},x_{n}) + d(y_{n-1},y_{n})\Big],$
\end{enumerate}
for any $n \geq 1$.  From (ii) and (iii), we get
$$d(x_n, x_{n+1}) + d(y_n, y_{n+1}) \leq k\ \Big[d(x_{n-1},x_{n}) + d(y_{n-1},y_{n})\Big],$$
for any $n \geq 1$.  Therefore, we must have
$$d(x_n, x_{n+1}) + d(y_n, y_{n+1}) \leq k^n\ \Big[d(x_{0},x_{1}) + d(y_0,y_{1})\Big],$$
for any $n \geq 0$.  Hence from (ii), we get
$$d(x_n, x_{n+1}) \leq \frac{k}{2} \Big[d(x_{n-1},x_{n}) + d(y_{n-1},y_{n})\Big] \leq \frac{k}{2}\ k^{n-1} \ \Big[d(x_{0},x_{1}) + d(y_0,y_{1})\Big],$$
i.e., $\displaystyle d(x_n, x_{n+1}) \leq \frac{k^n}{2} \Big[d(x_0,x_{1}) + d(y_0,y_{1})\Big]$, for any $n \geq 0$.  Similarly, we will get
$$d(y_n, y_{n+1}) \leq \frac{k^n}{2} \Big[d(x_0,x_{1}) + d(y_0,y_{1})\Big],$$
for any $n \geq 0$.  Since $k < 1$, we conclude that $\sum d(x_n,x_{n+1})$ and $\sum d(y_n,y_{n+1})$ are convergent which imply that $\{x_n\}$ and $\{y_n\}$ are Cauchy sequences.  Since $(X,d)$ is complete, there exist $x, y \in X$ such that
$$\lim\limits_{n\rightarrow +\infty} x_n = x \; and\; \lim\limits_{n\rightarrow +\infty} y_n = y.$$
Since $F$ is continuous, we get from (i) above
$$x = \lim\limits_{n\rightarrow +\infty} x_{n+1} = \lim\limits_{n\rightarrow +\infty} F(x_n,y_n) = F(\lim\limits_{n\rightarrow +\infty} x_n, \lim\limits_{n\rightarrow +\infty} y_n) = F(x,y)$$
and similarly $y = F(y,x)$, i.e., $(x,y)$ is a coupled fixed point of $F$.
\end{proof}

\begin{example}
Let $X = \mathbb{R}$, $d(x, y) = |x - y|$ and $F : X \times X \rightarrow X$ be
defined by $F(x,y)=\frac{x+y}{5}$, $(x,y)\in X \times X$. Let $G$ be the reflexive digraph defined on  $X$ with $((x,y),(u,v))\in E(G)$ if and only if $x\leq u$ and $v\leq y$. Then $F$ is mixed $G$-monotone and satisfies condition (BL). Indeed, let $k=\frac{2}{3}$ then
$d(F(x,y), F(u,v))=|\frac{(x+y)}{5}-\frac{(u+v)}{5}|=|\frac{(x-u)}{5}+\frac{(y-v)}{5}|\leq \frac{1}{5}(|x-u|+|y-v|)\leq \frac{1}{3}(|x-u|+|y-v|)= \frac{2/3}{2} \Big[d(x,u) + d(y,v)\Big]$,
for any $(x,y), (u,v) \in X\times X$ such that $\Big((x,y), (u,v)\Big) \in E(G)$. Notice that $((0,0),(0,0))\in E(G)$. So by Theorem \ref{BL} we have that $F$ has a coupled fixed point $(0, 0)$. To illustrate the proof of Theorem \ref{BL}, let us consider $(x_0,y_0)=(0,1)$, $F(0,1)=F(1,0)=\frac{1}{5}$ (notice that $((0,1),(\frac{1}{5},\frac{1}{5}))\in E(G)$). Then $x_n=y_n=\frac{1}{5}(\frac{2}{5})^{n-1}\rightarrow 0$ as $n\rightarrow \infty$. Thus by Theorem \ref{BL} $(0,0)$ is a couple fixed point of $F$.
\end{example}

\noindent The continuity assumption of $F$ may be relaxed as it was done by Nieto et al \cite{NPRL}.  Indeed, we will say that $(X,d,G)$ has property $\textbf{(*)}$ if the following hold:
\begin{enumerate}
\item[(i)] for any $\{x_n\}$ in $X$ such that $(x_n, x_{n+1}) \in E(G)$ and $\lim\limits_{n \rightarrow +\infty} x_n = x$, then $(x_n,x) \in E(G)$, and
\item[(ii)] for any $\{x_n\}$ in $X$ such that $(x_{n+1}, x_{n}) \in E(G)$ and $\lim\limits_{n \rightarrow +\infty} x_n = x$, then $(x,x_n) \in E(G)$.
\end{enumerate}

\medskip

\noindent We have the following result.

\begin{theorem}\label{Nieto-BL} Let $(X,d, G)$ be as above.  Assume that $(X,d)$ is a complete metric space and $(X,d,G)$ has property $\textbf{(*)}$.  Let $F: X\times X \rightarrow X$ be a mapping having the mixed $G$-monotone property on $X$.  Assume there exists $k < 1$ such that
\begin{equation}\tag{BL}
d(F(x,y), F(u,v)) \leq \frac{k}{2} \Big[d(x,u) + d(y,v)\Big],
\end{equation}
for any $(x,y), (u,v) \in X\times X$ such that $\Big((x,y), (u,v)\Big) \in E(G)$.  If there exist $x_0, y_0 \in X$ such that $\Big((x_0,y_0), (F(x_0,y_0),F(y_0,x_0))\Big) \in E(G)$, then there exist $(x,y)$ a coupled fixed point of $F$.
\end{theorem}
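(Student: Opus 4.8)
The plan is to follow the construction in the proof of Theorem~\ref{BL} almost verbatim up to the point where the limit is identified, and then replace the continuity argument with an argument based on property \textbf{(*)}. First I would build the two sequences $\{x_n\}$ and $\{y_n\}$ exactly as before, setting $x_{n+1} = F(x_n,y_n)$ and $y_{n+1} = F(y_n,x_n)$, starting from the hypothesized $x_0, y_0$ with $(x_0,F(x_0,y_0)) \in E(G)$ and $(F(y_0,x_0),y_0) \in E(G)$. Using the mixed $G$-monotone property one shows by induction that $(x_n,x_{n+1}) \in E(G)$ and $(y_{n+1},y_n) \in E(G)$ for all $n$, so that the contractive estimate (BL) applies at every step. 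The same telescoping/geometric estimates then give $d(x_n,x_{n+1}) \leq (k^n/2)[d(x_0,x_1)+d(y_0,y_1)]$ and the analogous bound for $y_n$, hence $\{x_n\}$ and $\{y_n\}$ are Cauchy and converge to some $x, y \in X$ by completeness. None of this uses continuity.

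The new ingredient is to identify $(x,y)$ as a coupled fixed point without passing the limit through $F$. Here I would invoke property \textbf{(*)}: since $(x_n,x_{n+1}) \in E(G)$ for all $n$ and $x_n \to x$, part~(i) yields $(x_n,x) \in E(G)$; since $(y_{n+1},y_n) \in E(G)$ and $y_n \to y$, part~(ii) yields $(y,y_n) \in E(G)$. In the notation of the product graph this says exactly that $\big((x_n,y_n),(x,y)\big) \in E(G)$, so condition (BL) may be applied to the pair $(x_n,y_n)$ and $(x,y)$. This gives
\begin{equation*}
d\big(x_{n+1}, F(x,y)\big) = d\big(F(x_n,y_n), F(x,y)\big) \leq \frac{k}{2}\big[d(x_n,x) + d(y_n,y)\big].
\end{equation*}
Letting $n \to +\infty$, the right-hand side tends to $0$ while $x_{n+1} \to x$, so $d(x,F(x,y)) = 0$, i.e. $F(x,y) = x$.

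An entirely symmetric argument establishes $F(y,x) = y$: from $(y,y_n) \in E(G)$ and $(x_n,x) \in E(G)$ we read off $\big((y,x),(y_n,x_n)\big) \in E(G)$, so (BL) applied to $(y,x)$ and $(y_n,x_n)$ gives $d(F(y,x), y_{n+1}) \leq \frac{k}{2}[d(x,x_n)+d(y,y_n)] \to 0$, and since $y_{n+1} \to y$ we obtain $F(y,x) = y$. Thus $(x,y)$ is a coupled fixed point.

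The step I expect to require the most care is the verification, via the mixed $G$-monotone property, that the edge relations $(x_n,x_{n+1}) \in E(G)$ and $(y_{n+1},y_n) \in E(G)$ propagate correctly through the induction, because the two coordinates are coupled with opposite monotonicity directions and one must track them jointly; once the product-graph edge $\big((x_n,y_n),(x_{n+1},y_{n+1})\big) \in E(G)$ is secured, everything else reduces to applying property \textbf{(*)} coordinatewise and taking limits in the contractive inequality.
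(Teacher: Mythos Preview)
Your proposal is correct and follows essentially the same approach as the paper: construct the sequences as in Theorem~\ref{BL}, use property~\textbf{(*)} to obtain $(x_n,x)\in E(G)$ and $(y,y_n)\in E(G)$, and then apply (BL) to the resulting product-graph edges to force $x_{n+1}\to F(x,y)$ and $y_{n+1}\to F(y,x)$. Your explicit identification of the product-graph edges $\big((x_n,y_n),(x,y)\big)$ and $\big((y,x),(y_n,x_n)\big)$ is exactly what underlies the paper's two displayed estimates, and your flagged inductive propagation of the edge relations matches the paper's item~(ii).
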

\begin{proof} As we did in the proof of Theorem \ref{BL}, we construct $\{x_n\}$ and $\{y_n\}$ in $X$ such that
\begin{enumerate}
\item[(i)] $x_{n+1} = F(x_n,y_n)$, and $y_{n+1} = F(y_n,x_n)$;
\item[(ii)] $(x_n,x_{n+1}) \in E(G)$ and $(y_{n+1}, y_n) \in E(G)$;
\item[(iii)] $\displaystyle d(x_n, x_{n+1}) \leq \frac{k}{2} \Big[d(x_n,x_{n+1}) + d(y_n,y_{n+1})\Big],$
\item[(iv)] $\displaystyle d(y_n, y_{n+1}) \leq \frac{k}{2} \Big[d(x_n,x_{n+1}) + d(y_n,y_{n+1})\Big],$
\end{enumerate}
for any $n \geq 0$. Similar to the proof of Theorem \ref{BL}, we conclude that $\{x_n\}$ and $\{y_n\}$ are Cauchy.   Since $(X,d)$ is complete, then there exist $x, y \in X$ such that
$$\lim\limits_{n\rightarrow +\infty} x_n = x \; and\; \lim\limits_{n\rightarrow +\infty} y_n = y.$$
The property $\textbf{(*)}$ implies
$$(x_n,x) \in E(G)\; and\; (y,y_n) \in E(G), $$
for any $n \geq 0$.  Since $F$ has the mixed $G$-monotone property on $X$, we get
$$d\Big(F(x_n,y_n), F(x,y)\Big) \leq \frac{k}{2} \Big[d(x_n,x) + d(y_n,y)\Big],$$
and
$$d\Big(F(y_n,x_n), F(y,x)\Big) \leq \frac{k}{2} \Big[d(x_n,x) + d(y_n,y)\Big],$$
for any $n \geq 0$.  Hence
$$d\Big(x_{n+1}, F(x,y)\Big) \leq \frac{k}{2} \Big[d(x_n,x) + d(y_n,y)\Big],$$
and
$$d\Big(y_{n+1}, F(y,x)\Big) \leq \frac{k}{2} \Big[d(x_n,x) + d(y_n,y)\Big],$$
for any $n \geq 0$.  This imply
$$\lim\limits_{n\rightarrow +\infty} x_n = F(x,y) \; and\; \lim\limits_{n\rightarrow +\infty} y_n = F(y,x),$$
i.e., $F(x,y) = x$ and $F(y,x) = y$.
\end{proof}

\medskip

\noindent Under the assumptions of both Theorems \ref{BL} and \ref{Nieto-BL}, if assume that $(x_0,y_0) \in E(G)$, then we have $x = y$.  Indeed, it is easy to see that for any $u, v \in X$ such that $(u,v) \in E(G)$, then the condition (BL) implies
$$d(F(u,v),F(v,u)) \leq k\ d(u,v).$$
This will imply that $d(x_{n+1},y_{n+1}) \leq k\ d(x_n,y_n)$, for any $n \geq 0$.  In particular, we have $d(x_n,y_n) \leq k^n \ d(x_0,y_0)$, for ay $n \geq 0$.  Since $k < 1$, we conclude that $d(x,y) = \lim\limits_{n\rightarrow +\infty} d(x_n,y_n) = 0$, i.e., $x = y$.

\begin{remark}\label{weakly} In this remark, we discuss the uniqueness of the coupled fixed point.  Under the assumptions of both Theorems \ref{BL} and \ref{Nieto-BL}, let $(x,y)$ and $(u,v)$ be two coupled fixed points of $F$.  Assume that $\Big((x,y), (u,v)\Big) \in E(G)$.  Since $F$ has the mixed $G$-monotone property on $X$, we get
$$d(F(x,y), F(u,v)) \leq \frac{k}{2} \Big[d(x,u) + d(y,v)\Big],$$
and
$$d(F(v,u), F(y,x)) \leq \frac{k}{2} \Big[d(x,u) + d(y,v)\Big],$$
with $k < 1$.  Since $(x,y)$ and $(u,v)$ are coupled fixed points of $F$, we get
$$d(x,u) \leq \frac{k}{2} \Big[d(x,u) + d(y,v)\Big],\; and\;\; d(y,v) \leq \frac{k}{2} \Big[d(x,u) + d(y,v)\Big],$$
which implies
$$d(x,u) + d(y,v) \leq k\ \big(d(x,u) + d(y,v)\Big).$$
Hence $\displaystyle d(x,u) + d(y,v) = 0$, which yields $(x,y) = (u,v)$.  Moreover assume that there exist $x_0,y_0 \in X$ such that $\Big((x_0,y_0), (F(x_0,y_0),F(y_0,x_0))\Big) \in E(G)$.  Let $(u,v)$ be a coupled fixed point of $F$ such that $\Big((x_0,y_0), (u,v)\Big) \in E(G)$, then
$$d(F(x_0,y_0), F(u,v)) = d(F(x_0,y_0), u) \leq \frac{k}{2} \Big[d(x_0,u) + d(y_0,v)\Big],$$
and
$$d(F(v,u), F(y_0,x_0)) = d(v, F(y_0,x_0)) \leq \frac{k}{2} \Big[d(x_0,u) + d(y_0,v)\Big],$$
since $F$ has the mixed $G$-monotone property.  If $\{x_n\}$ and $\{y_n\}$ are the two sequences generated by $x_0, y_0, F$ in the proof of both Theorems \ref{BL} and \ref{Nieto-BL}, then we have
$$d(x_{n+1}, u) \leq \frac{k}{2} \Big[d(x_n,u) + d(y_n,v)\Big] \leq \frac{k^n}{2} \Big[d(x_0,u) + d(y_0,v)\Big],$$
and
$$d(v, y_{n+1}) \leq \frac{k}{2} \Big[d(x_n,u) + d(y_n,v)\Big] \leq \frac{k^n}{2} \Big[d(x_0,u) + d(y_0,v)\Big],$$
for any $n \geq 1$.  Since $k < 1$, we get
$$\lim\limits_{n\rightarrow +\infty} x_n = u \; and\; \lim\limits_{n\rightarrow +\infty} y_n = v.$$
Therefore given $x_0,y_0 \in X$ such that $\Big((x_0,y_0), (F(x_0,y_0),F(y_0,x_0))\Big) \in E(G)$, there exists a unique coupled fixed point $(x,y)$ of $F$ such that $\Big((x_0,y_0), (x,y)\Big) \in E(G)$.
\end{remark}

\medskip

\noindent In the next section we discuss the multivalued version of the main results of this section.

\section{Coupled Fixed Points of Multivalued monotone mappings}
\noindent Let $(X,d)$ be a metric space. We denote by ${\cal CB}(X)$ the collection of all nonempty closed and bounded subsets of $X$.  The Pompeiu-Hausdorff distance on $\cal{CB} (X)$ is defined by
$$ H(A,B):= \max \{\sup\limits_{b\in B} d(b,A), \sup\limits_{a\in A} d(a,B)\},$$
for $A,B\in \cal{CB} (X)$, where $d(a,B):= \inf\limits_{b\in B}d(a,b)$. Let $F:X\times X \rightarrow {\cal CB}(X)$ be a multivalued mapping. We will say that $F$ is continuous if for any sequences $\{x_n\}$ and $\{y_n\}$ which converge respectively to $x$ and $y$, we have
$$\lim\limits_{n\rightarrow \infty} H(F(x_n,y_n),F(x,y))=0.$$
\noindent The following technical result is useful to explain our definition later on.

\begin{lemma}\label{HHH} Let $(X,d)$ be a metric space.  For any $A, B \in \cal{CB}(X)$ and $\varepsilon > 0$, we have:
\begin{enumerate}
\item[(i)] for $a \in A$, there exists $b \in B$ such that
$$d(a,b) \leq H(A,B) + \varepsilon;$$
\item[(ii)] for $b \in B$, there exists $a \in A$ such that
$$d(a,b) \leq H(A,B) + \varepsilon.$$
\end{enumerate}
\end{lemma}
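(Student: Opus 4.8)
The plan is to prove this directly from the definition of the Pompeiu-Hausdorff distance. The statement is an essentially standard approximation fact: the supremum defining $H(A,B)$ is attained only in the limit, so to realize a distance within $\varepsilon$ of $H(A,B)$ I must exploit the definition of distance from a point to a set as an infimum. The two parts are symmetric, so I would prove (i) carefully and then obtain (ii) by the same argument with the roles of $A$ and $B$ interchanged.

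For part (i), fix $a \in A$ and $\varepsilon > 0$. First I would observe that from the definition
$$H(A,B) = \max\Big\{\sup_{b \in B} d(b,A),\ \sup_{a' \in A} d(a',B)\Big\},$$
one immediately has $d(a,B) \leq \sup_{a' \in A} d(a',B) \leq H(A,B)$. The key step is then to unwind the infimum $d(a,B) = \inf_{b \in B} d(a,b)$: by the definition of infimum, for the given $\varepsilon > 0$ there must exist a point $b \in B$ with
$$d(a,b) \leq d(a,B) + \varepsilon.$$
Chaining these two inequalities yields $d(a,b) \leq d(a,B) + \varepsilon \leq H(A,B) + \varepsilon$, which is exactly the claim.

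For part (ii), I would run the identical argument starting from $b \in B$: using $d(b,A) \leq \sup_{b' \in B} d(b',A) \leq H(A,B)$ together with an $\varepsilon$-almost-minimizer $a \in A$ satisfying $d(a,b) \leq d(b,A) + \varepsilon$, and combining the two gives $d(a,b) \leq H(A,B) + \varepsilon$. Here the symmetry of the metric, $d(a,b) = d(b,a)$, is what lets the same bound appear in both parts without modification.

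I do not expect any genuine obstacle here; the result is purely a matter of correctly applying the definitions of supremum and infimum. The only point requiring mild care is that the approximation is \emph{necessary} precisely because the infimum need not be attained in an arbitrary closed bounded set of a general metric space, which is why the conclusion carries the additive slack $\varepsilon$ rather than asserting equality. If $B$ were compact the infimum would be achieved and one could take $\varepsilon = 0$, but under the stated hypotheses the $\varepsilon$-perturbation is indispensable.
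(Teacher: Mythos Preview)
Your argument is correct and is the standard proof of this elementary fact. The paper itself does not supply a proof of this lemma; it is stated without proof as a well-known technical result, so there is nothing to compare against, and your direct derivation from the definitions of $H(A,B)$ and of $d(a,B)$ as an infimum is exactly what is needed.
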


\medskip\noindent Note that from Lemma \ref{HHH}, whenever one uses multivalued mappings which involves the Pompeiu-Hausdorff distance, then one must assume that the multivalued mappings have bounded values.  Otherwise, one has only to assume that the multivalued mappings have nonempty closed values.

\medskip\medskip

\noindent Let $(X,d,G)$ be as before. We denote by ${\cal C}(X)$ the collection of all nonempty closed subsets of $X$. Let $F:X\times X \rightarrow {\cal C}(X)$ be a multivalued mapping. We will say that $F$ has the mixed $G$-monotone property on $X$ if:
\begin{enumerate}
\item[(i)] for any $x_1, x_2, y \in X$ such that $(x_1,x_2)\in E(G)$, for any $u \in F(x_1,y)$, there exists $v \in F(x_2,y)$ such that $(u,v) \in E(G)$;
\item[(ii)] for any $x, y_1, y_2 \in X$ such that $(y_1,y_2)\in E(G)$, for any $u \in F(x,y_2)$, there exists $v \in F(x,y_1)$ such that $(u,v) \in E(G)$;
\end{enumerate}
\noindent The pair $(x,y) \in X\times X$ is called a coupled fixed point of $F: X\times X \rightarrow {\cal C}(X)$ if
$$ x \in F(x,y),\;\; and\; y \in F(y,x).$$
\noindent The multivalued version of the condition (BL) may be stated as

\begin{definition} The multivalued mapping $F: X\times X \rightarrow {\cal C}(X)$ is said to satisfy the condition (MBL) if there exists $k < 1$ such that for any $(x,y), (u,v) \in X\times X$ with $\Big((x,y), (u,v)\Big) \in E(G)$, and for any $a \in F(x,y)$ there exists $b \in F(u,v)$ such that
\begin{equation}\tag{MBL}
d(a, b) \leq \frac{k}{2} \Big[d(x,u) + d(y,v)\Big].
\end{equation}
\end{definition}

\noindent Next we give an analogue result of Theorem \ref{BL} to the case of mixed $G$-monotone multivalued mappings in metric spaces.
\begin{theorem}\label{MBL}  Let $(X,d, G)$ be as above.  Assume that $(X,d)$ is a complete metric space.  Let $F: X\times X \rightarrow \cal{CB}(X)$ be a continuous multivalued mapping having the mixed $G$-monotone property on $X$ and satisfying (MBL) condition. If there exist $x_0, y_0 \in X$ and $x_1\in F(x_0,y_0)$, $y_1 \in F(y_0,x_0)$ such that $\Big((x_0,y_0), (x_1,y_1)\Big) \in E(G)$, then there exists $(x,y)$ a coupled fixed point of $F$.
\end{theorem}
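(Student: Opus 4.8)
The plan is to imitate the proof of Theorem~\ref{BL}, replacing each single value $F(x,y)$ by a carefully chosen selection, and to recover the coupled fixed point at the end from the continuity of $F$ together with the closedness of its values.

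First I would set up the iteration. Starting from the given data $x_1 \in F(x_0,y_0)$, $y_1 \in F(y_0,x_0)$ with $\big((x_0,y_0),(x_1,y_1)\big) \in E(G)$ --- which by the definition of the product graph means $(x_0,x_1) \in E(G)$ and $(y_1,y_0) \in E(G)$ --- I would build inductively two sequences $\{x_n\}$, $\{y_n\}$ with $x_{n+1} \in F(x_n,y_n)$ and $y_{n+1} \in F(y_n,x_n)$, maintaining at every stage the two edge relations $(x_n,x_{n+1}) \in E(G)$ and $(y_{n+1},y_n) \in E(G)$. The edges are to be propagated by the mixed $G$-monotone property and the metric control supplied by (MBL): once $(x_n,x_{n+1}) \in E(G)$ and $(y_{n+1},y_n) \in E(G)$ hold we have $\big((x_n,y_n),(x_{n+1},y_{n+1})\big) \in E(G)$, so applying (MBL) to the point $x_{n+1} \in F(x_n,y_n)$ yields $x_{n+2} \in F(x_{n+1},y_{n+1})$ with $d(x_{n+1},x_{n+2}) \le \tfrac{k}{2}\big[d(x_n,x_{n+1}) + d(y_n,y_{n+1})\big]$, and an analogous selection produces $y_{n+2}$.

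Once the sequences are in hand, the metric part is routine and mirrors Theorem~\ref{BL}: adding the two estimates gives $d(x_n,x_{n+1}) + d(y_n,y_{n+1}) \le k^n\big[d(x_0,x_1) + d(y_0,y_1)\big]$, so each of $\sum d(x_n,x_{n+1})$ and $\sum d(y_n,y_{n+1})$ converges, $\{x_n\}$ and $\{y_n\}$ are Cauchy, and completeness produces limits $x$ and $y$. To finish, I would use continuity of $F$ in the Pompeiu--Hausdorff sense: since $x_{n+1} \in F(x_n,y_n)$, we have $d(x, F(x,y)) \le d(x,x_{n+1}) + d(x_{n+1}, F(x,y)) \le d(x,x_{n+1}) + H(F(x_n,y_n),F(x,y))$, and both terms tend to $0$; as $F(x,y)$ is closed this forces $x \in F(x,y)$, and likewise $y \in F(y,x)$, i.e. $(x,y)$ is a coupled fixed point.

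The main obstacle is the construction step, and it is the propagation of the edges that requires care rather than the metric decay. The edge $\big((x_n,y_n),(x_{n+1},y_{n+1})\big) \in E(G)$ needed to invoke (MBL) at stage $n$ follows at once, but to advance to stage $n+1$ I must produce $x_{n+2} \in F(x_{n+1},y_{n+1})$ that is simultaneously an edge-successor of $x_{n+1}$ and the metrically close point. The mixed $G$-monotone property delivers an edge-successor only through the intermediate set $F(x_{n+1},y_n)$ (apply part (i) to $(x_n,x_{n+1})$, then part (ii) to $(y_{n+1},y_n)$), producing a two-step $G$-path, whereas (MBL) delivers a metrically close point that need not be an edge-successor; reconciling these two selections into a single $x_{n+2}$ is the heart of the argument, and it is where one must interleave the two conditions carefully (or exploit additional structure of $G$). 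A second, more minor subtlety --- invisible in the single-valued Theorem~\ref{BL} because $d$ is symmetric --- is that (MBL) is directional, while for $\{y_n\}$ the available edge runs from $(y_{n+1},x_{n+1})$ to $(y_n,x_n)$; I must therefore apply (MBL) in the correct orientation (or pass to $\widetilde{G}$) so that the resulting estimate genuinely bounds $d(y_{n+1},y_{n+2})$ and not a distance to some unrelated selection.
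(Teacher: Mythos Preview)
Your strategy coincides with the paper's: the same inductive selection $x_{n+1}\in F(x_n,y_n)$, $y_{n+1}\in F(y_n,x_n)$, the same telescoping to $d(x_n,x_{n+1})+d(y_n,y_{n+1})\le k^n\bigl[d(x_0,x_1)+d(y_0,y_1)\bigr]$, and the same endgame via Hausdorff-continuity and closedness of the values. The paper routes the last step through Lemma~\ref{HHH}, choosing $b_n\in F(x,y)$ with $d(x_{n+1},b_n)\le H(F(x_n,y_n),F(x,y))+\tfrac{1}{n}$ and letting $b_n\to x$; your direct bound $d(x,F(x,y))\le d(x,x_{n+1})+H(F(x_n,y_n),F(x,y))$ is an equivalent, slightly cleaner phrasing of the same idea.

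The two obstacles you flag are real, and you should be aware that the paper does not address them either. Its proof simply writes ``By induction, we construct two sequences\dots'' listing only the memberships $x_{n+1}\in F(x_n,y_n)$, $y_{n+1}\in F(y_n,x_n)$ and the two metric inequalities, without ever recording or justifying the edges $(x_n,x_{n+1})\in E(G)$, $(y_{n+1},y_n)\in E(G)$ that are needed to invoke (MBL) at the next step. So the reconciliation problem you isolate --- that (MBL) delivers a metrically close selection with no edge guarantee, while the mixed $G$-monotone property delivers an edge-successor (and only through a two-step path via $F(x_{n+1},y_n)$, which would further require transitivity of $G$) with no metric guarantee --- is glossed over in the paper as well. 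Your second point about orientation is also legitimate: the only product-graph edge available for the $y$-sequence runs from $(y_{n+1},x_{n+1})$ to $(y_n,x_n)$, so (MBL) read literally yields an element of $F(y_n,x_n)$ rather than the desired $y_{n+2}\in F(y_{n+1},x_{n+1})$; the paper applies (MBL) in that direction without comment. In short, your outline is at least as complete as the paper's own argument, and more candid about where the genuine gaps lie.
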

\begin{proof} By assumption, there exist $x_0, y_0 \in X$ and $x_1\in F(x_0,y_0)$, $y_1 \in F(y_0,x_0)$ such that $\Big((x_0,y_0), (x_1,y_1)\Big) \in E(G)$.  Then $(x_0,x_1) \in E(G)$ and $(y_1,y_0) \in E(G)$. Since $F$ satisfies the (MBL) condition, then there exists $x_2\in F(x_1,y_1)$ and $y_2\in F(y_1,x_1)$ with
$$d\Big(x_1, x_2\Big) \leq \frac{k}{2} \Big[d(x_0,x_1) + d(y_0,y_1)\Big],$$
and
$$d\Big(y_1, y_2\Big) \leq \frac{k}{2} \Big[d(x_0,x_1) + d(y_0,y_1)\Big].$$
By induction, we construct two sequences $\{x_n\}$ and $\{y_n\}$ in $X$ such that
\begin{enumerate}
\item[(i)] $x_{n+1} \in F(x_n,y_n)$, and $y_{n+1} \in F(y_n,x_n)$;
\item[(ii)] $\displaystyle d(x_n, x_{n+1}) \leq \frac{k}{2} \Big[d(x_{n-1},x_{n}) + d(y_{n-1},y_{n})\Big],$
\item[(iii)] $\displaystyle d(y_n, y_{n+1}) \leq \frac{k}{2} \Big[d(x_{n-1},x_{n}) + d(y_{n-1},y_{n})\Big],$
\end{enumerate}
for any $n \geq 1$.  As we did in the proof of Theorem \ref{BL}, we have
$$d(x_n, x_{n+1}) \leq \frac{k^n}{2} \Big[d(x_0,x_{1}) + d(y_0,y_{1})\Big],$$
and
$$d(y_n, y_{n+1}) \leq \frac{k^n}{2} \Big[d(x_0,x_{1}) + d(y_0,y_{1})\Big],$$
for any $n \geq 1$.  Since $k< 1$ we conclude that $\sum d(x_n,x_{n+1})$ and $\sum d(y_n,y_{n+1})$ are convergent which imply that $\{x_n\}$ and $\{y_n\}$ are Cauchy sequences.  Since $(X,d)$ is complete, then there exist $x, y \in X$ such that
$$\lim\limits_{n\rightarrow +\infty} x_n = x \; and\; \lim\limits_{n\rightarrow +\infty} y_n = y.$$
Since $F$ is continuous, we get
$$\lim\limits_{n \rightarrow \infty} H(F(x_n,y_n), F(x,y)) = 0.$$
Since $x_{n+1} \in F(x_n,y_n)$, Lemma \ref{HHH} implies the existence of $b_n \in F(x,y)$ such that
$$d(x_{n+1}, b_n) \leq H(F(x_n,y_n), F(x,y)) + \frac{1}{n},$$
for any $n \geq 1$.  Clearly, we have $\lim\limits_{n \rightarrow \infty} b_n = x$.  Since $F(x,y)$ is closed, we conclude that $x \in F(x,y)$.  Similarly, we will show that $y \in F(y,x)$, i.e., $(x,y)$ is a coupled fixed point of $F$.
\end{proof}

\begin{example}
Let $X = \mathbb{R}$, $d(x, y) = |x - y|$ and $F : X \times X \rightarrow \cal{CB}(X)$ be
defined by $F(x,y)=\{ -\frac{x+y}{5}, \frac{x+y}{5}\}$, $(x,y)\in X \times X$. Let $G$ be the reflexive digraph defined on  $X$ with $((x,y),(u,v))\in E(G)$ if and only if $x\leq u$ and $v\leq y$. Then $F$ is mixed $G$-monotone and satisfies condition (MBL). Indeed, let $k=\frac{2}{3}$ and for any  $u \in F(x.y)$ take $v=u\in F(y,x)$,then
$0=d(u, v)\leq \frac{1}{5}(|u-x|+|v-y|)\leq \frac{1}{3}(|u-x|+|v-y|)= \frac{2/3}{2} \Big[d(u,x) + d(v,y)\Big]$, for any $(x,y), (u,v) \in X\times X$  with $\Big((x,y), (u,v)\Big) \in E(G)$. Notice that $((0,0),(0,0))\in E(G)$. So by Theorem \ref{MBL} we have that $F$ has a coupled fixed point $(0, 0)$. To illustrate the proof of Theorem \ref{MBL}, let us consider $(x_0,y_0)=(0,1)$, if $u=\frac{-1}{5}\in F(0,1)$ take $v=\frac{-1}{5}$(notice that $((0,1),(\frac{-1}{5},\frac{-1}{5}))\in E(G)$). Then $x_n=y_n=\frac{-1}{5}(\frac{2}{5})^{n-1}\rightarrow 0$ as $n\rightarrow \infty$. Thus by Theorem \ref{MBL} $(0,0)$ is a couple fixed point of $F$.
\end{example}

\noindent As we did in the single valued case, the continuity assumption of $F$ can be relaxed using Property $\textbf{(*)}$.  We have the following result.

\begin{theorem}\label{Nieto-MBL} Let $(X,d, G)$ be as above.  Assume that $(X,d)$ is a complete metric space and $(X,d,G)$ has property $\textbf{(*)}$.  Let $F: X\times X \rightarrow \cal{C}(X)$ be a multivalued mapping having the mixed $G$-monotone property on $X$ and satisfying (MBL) condition.   If there exist $x_0, y_0 \in X$ and $x_1\in F(x_0,y_0)$, $y_1 \in F(y_0,x_0)$ such that $\Big((x_0,y_0), (x_1,y_1)\Big) \in E(G)$, then there exist $(x,y)$ a coupled fixed point of $F$.
\end{theorem}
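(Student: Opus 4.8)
The plan is to fuse the two constructions already in hand: the iterative selection scheme from the proof of Theorem~\ref{MBL}, which produces Cauchy sequences out of the (MBL) condition, together with the limit argument from the proof of Theorem~\ref{Nieto-BL}, in which property $\textbf{(*)}$ takes over the role played by continuity. Starting from $x_0,y_0$ and $x_1\in F(x_0,y_0)$, $y_1\in F(y_0,x_0)$ with $\big((x_0,y_0),(x_1,y_1)\big)\in E(G)$, I would first unpack the product edge into its coordinate edges $(x_0,x_1)\in E(G)$ and $(y_1,y_0)\in E(G)$.

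Next I would build $\{x_n\}$ and $\{y_n\}$ by induction. At each stage the mixed $G$-monotone property is used to select $x_{n+1}\in F(x_n,y_n)$ and $y_{n+1}\in F(y_n,x_n)$ so that the edges $(x_n,x_{n+1})\in E(G)$ and $(y_{n+1},y_n)\in E(G)$ persist --- exactly as the edge relations are propagated in the proof of Theorem~\ref{Nieto-BL} --- while the (MBL) condition, applied along the product edge $\big((x_{n-1},y_{n-1}),(x_n,y_n)\big)$, delivers the two contractive estimates
\[
 d(x_n,x_{n+1})\leq \frac{k}{2}\big[d(x_{n-1},x_n)+d(y_{n-1},y_n)\big],\qquad d(y_n,y_{n+1})\leq \frac{k}{2}\big[d(x_{n-1},x_n)+d(y_{n-1},y_n)\big].
\]
Adding these and iterating, exactly as in Theorem~\ref{MBL}, gives $d(x_n,x_{n+1}),\,d(y_n,y_{n+1})\leq \tfrac{k^n}{2}\big[d(x_0,x_1)+d(y_0,y_1)\big]$; since $k<1$ both series converge, so $\{x_n\}$ and $\{y_n\}$ are Cauchy and, by completeness, $x_n\to x$ and $y_n\to y$ for some $x,y\in X$.

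For the passage to the limit I would invoke property $\textbf{(*)}$: from $(x_n,x_{n+1})\in E(G)$ and part (i) we get $(x_n,x)\in E(G)$, and from $(y_{n+1},y_n)\in E(G)$ and part (ii) we get $(y,y_n)\in E(G)$, for every $n$. These two relations are precisely what is needed to form the product edge $\big((x_n,y_n),(x,y)\big)\in E(G)$. Applying the (MBL) condition to this edge and the point $x_{n+1}\in F(x_n,y_n)$ produces $b_n\in F(x,y)$ with $d(x_{n+1},b_n)\leq \tfrac{k}{2}\big[d(x_n,x)+d(y_n,y)\big]\to 0$; since $x_{n+1}\to x$ as well, $b_n\to x$, and closedness of $F(x,y)$ yields $x\in F(x,y)$. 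Note that, unlike in Theorem~\ref{MBL}, no appeal to Lemma~\ref{HHH} is needed here because (MBL) itself supplies the nearby point.

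The step I expect to be the main obstacle is the orientation bookkeeping for the second coordinate. To conclude $y\in F(y,x)$ by the same device one needs a point of $F(y,x)$ lying near $y_{n+1}\in F(y_n,x_n)$, that is, the (MBL) selection applied along a product edge oriented \emph{from} $(y_n,x_n)$ \emph{to} $(y,x)$ --- which would require $(y_n,y)\in E(G)$ and $(x,x_n)\in E(G)$ --- whereas property $\textbf{(*)}$ naturally furnishes the reverse orientation $(y,y_n)\in E(G)$ and $(x_n,x)\in E(G)$. I would resolve this by exploiting the symmetry of the coupled iteration under interchanging the two coordinates, together with the mixed $G$-monotone property, to produce the edge in the required orientation; (MBL) then yields points of $F(y,x)$ converging to $y$, and closedness of $F(y,x)$ gives $y\in F(y,x)$. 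Combining the two coordinates, $(x,y)$ is a coupled fixed point of $F$.
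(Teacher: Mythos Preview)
Your plan coincides with the paper's proof almost step for step: construct $\{x_n\},\{y_n\}$ with the edge relations $(x_n,x_{n+1})\in E(G)$, $(y_{n+1},y_n)\in E(G)$ and the (MBL) estimates, deduce Cauchy and take limits $x,y$, then use property $\textbf{(*)}$ to get $(x_n,x)\in E(G)$ and $(y,y_n)\in E(G)$, and finally apply (MBL) along the resulting product edges to find points $x_n^*\in F(x,y)$, $y_n^*\in F(y,x)$ converging to $x$ and $y$ respectively, so that closedness finishes the job. Your observation that Lemma~\ref{HHH} is not needed here is correct and matches the paper.

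Where you are actually more careful than the paper is the orientation issue for the second coordinate. The paper simply writes the two estimates for $x_n^*$ and $y_n^*$ in parallel, attributing them to ``the mixed $G$-monotone property'' without checking that the product edge needed for the $y$-coordinate application of (MBL), namely $\big((y_n,x_n),(y,x)\big)\in E(G)$, is available; property~$\textbf{(*)}$ instead gives $\big((y,x),(y_n,x_n)\big)\in E(G)$, and (MBL) as stated is directional. You correctly flag this. However, your proposed fix --- ``exploiting the symmetry of the coupled iteration under interchanging the two coordinates, together with the mixed $G$-monotone property'' --- is not a concrete argument: neither the mixed $G$-monotone property nor (MBL) lets you pass from a given point of $F(y_n,x_n)$ to a nearby point of $F(y,x)$ along the edge orientation you actually have. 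So at this step your write-up and the paper's are on the same footing: the paper glosses over it, and your resolution is only a gesture. If you want a complete proof you will need either to strengthen the (MBL) hypothesis to a two-sided selection, or to justify this step explicitly.
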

\begin{proof} As we did in the proof of Theorem \ref{BL}, we construct $\{x_n\}$ and $\{y_n\}$ in $X$ such that
\begin{enumerate}
\item[(i)] $x_{n+1} \in F(x_n,y_n)$, and $y_{n+1} \in F(y_n,x_n)$;
\item[(ii)] $(x_n,x_{n+1}) \in E(G)$ and $(y_{n+1}, y_n) \in E(G)$;
\item[(iii)] $\displaystyle d(x_n, x_{n+1}) \leq \frac{k}{2} \Big[d(x_{n-1},x_{n}) + d(y_{n-1},y_{n})\Big],$
\item[(iv)] $\displaystyle d(y_n, y_{n+1}) \leq \frac{k}{2} \Big[d(x_{n-1},x_{n}) + d(y_{n-1},y_{n})\Big],$
\end{enumerate}
for any $n \geq 1$. Clearly both sequences $\{x_n\}$ and $\{y_n\}$ are Cauchy.   Since $(X,d)$ is complete, then there exist $x, y \in X$ such that
$$\lim\limits_{n\rightarrow +\infty} x_n = x \; and\; \lim\limits_{n\rightarrow +\infty} y_n = y.$$
The property $\textbf{(*)}$ implies
$$(x_n,x) \in E(G)\; and\; (y,y_n) \in E(G), $$
for any $n \geq 1$.  Since $F$ has the mixed $G$-monotone property on $X$, there exist $x_n^*\in F(x,y)$ and $y_n^*\in F(y,x)$ with
$$d(x_{n+1}, x_n^*) \leq \frac{k}{2} \Big[d(x_n,x) + d(y_n,y)\Big],$$
and
$$d(y_{n+1}, y_n^*) \leq \frac{k}{2} \Big[d(x_n,x) + d(y_n,y)\Big],$$
for any $n \geq 1$.  This will imply
$$\lim\limits_{n\rightarrow +\infty} d(x_{n+1}, x_n^*) = 0 \;\; \text{and}\;\; \lim\limits_{n\rightarrow +\infty} d(y_{n+1}, y_n^*) = 0.$$
Therefore, we have
$$\lim\limits_{n\rightarrow +\infty} x_n^* = x \; \text{and} \; \lim\limits_{n\rightarrow +\infty} y_n^* = y.$$
Since $F(x,y)$ and $F(y,x)$ are closed, we conclude that $x\in F(x,y)$ and $y\in F(y,x)$, i.e., $(x,y)$ is a coupled fixed point of $F$.
\end{proof}

\medskip\medskip\medskip

\noindent {\large{\textbf{Acknowledgements}}}
\medskip \\
\noindent The authors would like to acknowledge the support provided by the Deanship of Scientific Research at King Fahd University of Petroleum \& Minerals for funding this work through project No. IP142-MATH-111.

\medskip\medskip

\end{document}